\newcommand{\mb}{\mathbb}
\newcommand{\mc}{\mathcal}
\newcommand{\ol}{\overline}
\newcommand{\wt}{\widetilde}
\newcommand{\Hilb}{{\rm Hilb}}
\newcommand{\RHilb}{\wt{{\rm Hilb}}}
\newcommand{\Spec}{{\rm Spec}}
\newcommand{\ms}{\mathscr}
\newcommand*{\sheafhom}{\mathscr{H}\kern -.5pt om}
\newcommand{\OO}{\mathscr{O}}
\newcommand{\NE}{{\rm NE}}
\newcommand{\Nef}{{\rm Nef}}
\newcommand{\Curv}{{\rm Curv}}
\begin{document}
\theoremstyle{plain}
\newtheorem{Thm}{Theorem}[section]
\newtheorem{Cor}[Thm]{Corollary}
\newtheorem{Conj}[Thm]{Conjecture}
\newtheorem{Pro}[Thm]{Problem}
\newtheorem{Main}{Main Theorem}
\renewcommand{\theMain}{}
\newcommand{\Sol}{\text{Sol}}
\newtheorem{Lem}[Thm]{Lemma}
\newtheorem{Claim}[Thm]{Claim}
\newtheorem{Prop}[Thm]{Proposition}
\newtheorem{ToDo}{To Do}

\theoremstyle{definition}
\newtheorem{Exam}[Thm]{Example}
\newtheorem{Def}[Thm]{Definition}
\newtheorem{Exer}[Thm]{Exercise}
\newtheorem{Rem}[Thm]{Remark}

\theoremstyle{remark}

\title{Complete families of immersed curves}
\author{Dennis Tseng}
\address{Dennis Tseng, Harvard University, Cambridge, MA 02138}
\email{DennisCTseng@gmail.com}
\date{\today}

\allowdisplaybreaks

\begin{abstract}
We extend results of Chang and Ran regarding large dimensional families of immersed curves of positive genus in projective space in two directions. In one direction, we prove a sharp bound for the dimension of a complete family of smooth rational curves immersed into projective space, completing the picture in projective space. In another direction, we isolate the necessary positivity condition on the tangent bundle of projective space used to run the argument, which allows us to rule out large dimensional families of immersed curves of positive genus in generalized flag varieties.
\end{abstract}

\maketitle

\section{Introduction}
One might expect that a complete family of curves will have singular elements. A natural question is: Do there exist nontrivial complete families of smooth curves, and, if so, what is the largest possible dimension of such a family? 


Our starting point is the following sharp result by Chang and Ran
\begin{Thm}
[{\cite[Theorem 1]{CR2}}]
\label{positivegenus}
The largest dimension of a complete family of smooth genus $g$ curves immersed into $\mb{P}^n$ is $n-2$ for $g>0$. 
\end{Thm}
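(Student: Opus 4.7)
\textbf{Upper bound, by contradiction.} Suppose $B$ is a smooth projective variety of dimension $m \geq n-1$ supporting an effective family $\pi:\mc{C} \to B$ of smooth genus-$g$ curves with $g > 0$, together with a morphism $f: \mc{C} \to \mb{P}^n$ that restricts to an immersion on every fiber. Set $L := f^*\OO_{\mb{P}^n}(1)$. The two fundamental sequences on $\mc{C}$ are the pullback of the Euler sequence,
\begin{equation*}
0 \to \OO_\mc{C} \to L^{\oplus(n+1)} \to f^* T_{\mb{P}^n} \to 0,
\end{equation*}
and the relative normal sequence,
\begin{equation*}
0 \to T_{\mc{C}/B} \to f^* T_{\mb{P}^n} \to N \to 0,
\end{equation*}
where $N$ is locally free of rank $n-1$ because $f|_{C_b}$ is an immersion. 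Composing the differential $df: T_\mc{C} \to f^* T_{\mb{P}^n}$ with the quotient to $N$ kills $T_{\mc{C}/B}$ and descends to a Kodaira--Spencer-type morphism $\alpha : \pi^* T_B \to N$ of vector bundles on $\mc{C}$, generically injective by effectivity of the family.

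The plan is to extract a numerical contradiction from $\alpha$ in the boundary case $m = n-1$ (the case $m > n-1$ then follows by restricting to a generic subfamily). Taking determinants yields $\det N \cong L^{\otimes(n+1)} \otimes T_{\mc{C}/B}^{-1}$; on a fiber $C_b$ this line bundle has degree $(n+1)d + 2g - 2$ with $d := \deg f|_{C_b}$. When $m = n-1$, source and target of $\alpha$ have equal rank, so $\det \alpha$ is a global section of $\det N \otimes \pi^*\det T_B^{-1}$ on $\mc{C}$; analyzing how its zero-locus meets fibers, combined with the positivity of $L$ (relatively ample over $B$) and the Euler-sequence factorization, should force an inconsistency. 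The genus hypothesis $g > 0$ enters indispensably through $\deg T_{\mc{C}/B}|_{C_b} = 2-2g \leq 0$, which caps the positivity that $T_{\mc{C}/B}$ can absorb and distinguishes this bound from the rational-curve case treated separately in the paper.

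\textbf{The main obstacle} I expect is producing the sharp intersection inequality: one must balance the positivity of $L$ against the rank constraints from $\alpha$ and the Euler sequence so that $g > 0$ is exactly what breaks the estimate. A natural route is to cut $B$ down by ample divisors to a complete curve $B_0 \subset B$ and work on the resulting projective surface $\mc{C}_0 = \pi^{-1}(B_0)$, where self-intersections of $L$ and $c_1(T_{\mc{C}/B})$ can be traded against one another directly using the Chern-class identities from the two sequences.

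\textbf{Sharpness ($m = n-2$).} For the construction, I would exhibit a smooth projective surface $S \hookrightarrow \mb{P}^n$ carrying a complete, base-point-free linear system $|D| \cong \mb{P}^{n-2}$ whose members are all smooth curves of genus $g$; restricting the embedding $S \hookrightarrow \mb{P}^n$ to each member realizes $|D|$ as a complete $(n-2)$-dimensional family of smooth immersed genus-$g$ curves in $\mb{P}^n$.
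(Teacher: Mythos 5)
Your setup agrees with how the paper begins: the exact sequence $0 \to T_{\mc{C}/B} \to f^{*}T_{\mb{P}^n} \to N \to 0$, with $N$ locally free of rank $n-1$ because each fiber is immersed, is precisely the content of Lemma \ref{obstruction}, since it forces $c_n(f^{*}T_{\mb{P}^n}\otimes T_{\mc{C}/B}^{*}) = (K^n + f^{*}c_1\cdot K^{n-1}+\cdots+f^{*}c_n)\cap[\mc{C}]=0$ on the $n$-dimensional total space. But your proposal stops exactly where the theorem is actually proved. The plan to take $\det\alpha$ for $\alpha:\pi^{*}T_B\to N$ and argue that its zero locus ``should force an inconsistency'' does not produce one: if $\det\alpha\neq 0$ you learn only that the divisor class $c_1(N)-\pi^{*}c_1(T_B)=(n+1)H+K-\pi^{*}c_1(T_B)$ is effective, which contradicts nothing, and $\alpha$ may in any case degenerate along a divisor. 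The actual contradiction is numerical: one shows the vanishing top-degree class above is strictly positive. That needs two inputs you never supply. First, $K=-c_1(T_{\mc{C}/B})$ is nef on the \emph{total space} $\mc{C}$ when $g>0$ (Arakelov's semipositivity, the reference the paper cites in the remark after Theorem \ref{posTX}); this is much stronger than your fiberwise observation $\deg T_{\mc{C}/B}|_{C_b}=2-2g\le 0$, and it is the only place $g>0$ enters --- which is why the genus-zero case requires the separate $K+H$ argument of Section \ref{Pn}. Second, with $c_i(T\mb{P}^n)=\binom{n+1}{i}H^i$ and $K$ nef, every term $K^{n-i}H^i$ is nonnegative, and the term with $i=b:=\dim f(\mc{C})$ equals a sum of integrals $\int_{f^{-1}(p)}K^{n-b}$ over general $p\in f(\mc{C})$; the strict positivity of these is Corollary \ref{gauss}, proved by showing the Gauss map $f^{-1}(p)\to\mb{P}(T_p\mb{P}^n)$ is generically finite via curvilinear schemes (Lemma \ref{curvilinear}), and this is the only step that uses the nondegeneracy (maximal moduli) hypothesis, without which the theorem is false. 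Absent both ingredients there is no contradiction; with them, your $\alpha$ and its determinant are not needed.

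On sharpness, demanding a complete linear system $|D|\cong\mb{P}^{n-2}$ on a surface in $\mb{P}^n$ \emph{all} of whose members are smooth of genus $g$ is far stronger than necessary and difficult to arrange: Bertini gives smoothness only of the general member, and the discriminant of a linear system with smooth general member is rarely empty. The construction the paper points to (Example 2.2 of Chang--Ran's earlier paper) instead takes any $(n-2)$-dimensional family of smooth genus-$g$ curves, embeds its total space into a large projective space by a very ample bundle, and projects generically to $\mb{P}^n$, with genericity preserving immersedness of each fiber.
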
 

Using the same methods, we will extend Theorem \ref{positivegenus} in two directions. First, we extend Theorem \ref{positivegenus} to the case of genus zero curves, and obtain a sharp result. This is new even in the case of conics, for example ruling out 2-parameter families of smooth conics in $\mb{P}^3$ as asked by DeLand \cite[Question 1.4]{deLand}. 
\begin{Thm}[Theorem \ref{PnT}]
\label{CRC}
The largest dimension of a complete family of smooth rational curves immersed into $\mb{P}^n$ is dimension $n-2$, except in the case of lines.
\end{Thm}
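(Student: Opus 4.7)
The plan is to combine Mori's bend-and-break with the positive-genus bound of Theorem \ref{positivegenus} to rule out large complete families of smooth immersed rational curves of degree $d \geq 2$. The exceptional case $d = 1$ corresponds to the Grassmannian $G(1, n)$ of lines, which is complete of dimension $2(n-1)$, so I assume $d \geq 2$ throughout. Let $B$ be a complete family of smooth immersed rational curves in $\mathbb{P}^n$ of common degree $d$, with universal curve $\pi: \mathcal{C} \to B$ and evaluation $\ev: \mathcal{C} \to \mathbb{P}^n$; the goal is to show $\dim B \leq n-2$, and I will assume $\dim B \geq n-1$ to derive a contradiction.

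My first step would be to dispose of the sub-case $\dim B \geq n$ with dominant $\ev$. Here, for a generic point $p \in \mathbb{P}^n$, the subfamily $B_p \subset B$ of curves containing $p$ is complete of dimension $\dim B + 1 - n \geq 1$, so Mori's bend-and-break produces a reducible member of $B_p$, contradicting smoothness of fibers of $\pi$. When $\ev$ is not dominant, the image $Y := \ev(\mathcal{C}) \subsetneq \mathbb{P}^n$ has dimension at most $n-1$, and I would induct on $n$ (base case $n = 2$: a $1$-parameter complete family of smooth conics in $\mathbb{P}^2$ would be a complete curve in $\mathbb{P}^5$ missing the discriminant hypersurface, which is impossible), using linear span arguments and generic linear projections chosen to preserve smoothness and immersions of each $C \in B$.

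The core case, which I expect to be the main obstacle, is the threshold $\dim B = n-1$ with $\ev$ dominant, where through each generic point of $\mathbb{P}^n$ pass only finitely many curves of $B$. My primary strategy is to reduce to Theorem \ref{positivegenus} by manufacturing from $B$ a complete $(n-1)$-dimensional family of smooth immersed \emph{positive-genus} curves in $\mathbb{P}^n$, which that theorem forbids. The natural candidate is a branched cyclic cover of $\mathcal{C}$ along sections coming from preimages of generic points of $\mathbb{P}^n$, which raises the fiber genus; the serious difficulty is that branching ramifies the composition to $\mathbb{P}^n$ and destroys the immersion, so one must either absorb the ramification via a blowup–contraction modification of the family, or avoid the branched cover entirely.

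The cleaner alternative route, aligned with the abstract's remark about isolating the positivity of $T_{\mathbb{P}^n}$, is a direct Chern-class argument on $\mathcal{C}$: the quotient $\mathcal{N} := \ev^* T_{\mathbb{P}^n}/T_{\mathcal{C}/B}$ is a rank $n-1$ vector bundle on $\mathcal{C}$, and by ampleness of $T_{\mathbb{P}^n}$ its restriction to each fiber $C \cong \mathbb{P}^1$ is an ample bundle of degree $d(n+1) - 2$. Combining this fiberwise positivity with the completeness of $B$ (which forces certain pushforward classes on $B$ to be nef) should yield a numerical inequality that collapses when $d \geq 2$ at the dimension $\dim B = n-1$. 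Extracting the exact positivity input and matching it with the bend-and-break step is, I expect, the technical heart of the argument and the step most likely to require new ideas.
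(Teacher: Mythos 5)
There is a genuine gap: your proposal is a plan rather than a proof, and the case you correctly identify as the heart of the matter ($\dim B = n-1$) is left open. Of the two routes you offer for it, the first (cyclic branched covers to manufacture a positive-genus family and quote Theorem \ref{positivegenus}) founders exactly where you say it does --- the cover is ramified over the chosen sections, so the composite to $\mb{P}^n$ is no longer an immersion there, and no repair is supplied. The second route is in fact the right one (it is essentially what the paper does: the differential gives a nowhere-vanishing section of the rank-$n$ bundle $f^{*}T\mb{P}^n\otimes T_{\pi}^{\vee}$ on the $n$-dimensional total space, forcing $c_n$ of that bundle to vanish, i.e.\ $H^n+H^{n-1}(K+H)+\cdots+(K+H)^n=0$ as in Lemma \ref{obstructionPn}), but the two inputs that make the contradiction work are precisely the ones you leave unextracted: (i) $K+H$ is nef on $\ms{C}$ when $d\geq 2$, proved by restricting to ruled surfaces over test curves in $B$ (Lemmas \ref{nef} and \ref{ruledsurface}) --- this is where $d\geq 2$ enters, not through any fiberwise degree count such as $d(n+1)-2$; and (ii) at least one term, namely $H^{s}(K+H)^{n-s}$ with $s=\dim f(\ms{C})$, is strictly positive because the Gauss map on $f^{-1}(p)$ is generically finite for general $p$ (Lemma \ref{curvilinear} and Corollary \ref{gauss}), which is where the maximal-moduli hypothesis is used via the curvilinear-scheme argument. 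Without (i) and (ii) the ``numerical inequality that collapses'' does not materialize.

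Separately, your opening reduction for $\dim B\geq n$ misquotes bend-and-break: a complete one-dimensional family of rational curves through a \emph{single} fixed point need not break (the images in the Veronese surface of the pencil of lines through a point of $\mb{P}^2$ form a complete one-parameter family of smooth conics through a fixed point of $\mb{P}^5$); breaking requires two fixed base points, and with $\dim B\geq n$ the subfamily through two general points is only $0$-dimensional, so this step fails as stated. The non-dominant case is likewise only gestured at (generic projection does not obviously preserve the immersion condition uniformly over $B$). None of these reductions are needed: the paper's argument slices $B$ down to dimension exactly $n-1$ and runs the single Chern-class computation above in all cases at once.
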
 

In another direction, we can try to generalize the ambient variety from $\mb{P}^n$ to a smooth, $n$-dimensional variety $X$. In this case, we can show that strong conditions on the positivity of the tangent bundle of $X$ yield the same upper bound in the case of positive genus curves. 
Over $\mb{C}$, the only varieties that can satisfy this positivity condition are products of generalized flag varieties, and in fact they do.
\begin{Thm}[Theorem \ref{thmGP}]
\label{GPThmI}
Let $X=G_1/P_1\times\cdots\times G_i/P_i$ be a product of generalized flag varieties over $\mb{C}$. If $X$ is $n$-dimensional, then any complete family of smooth, positive genus curves immersed into $X$ is dimension at most $n-2$. 
\end{Thm}

As suggested by the statement, the bound in Theorem \ref{positivegenus} and Theorem \ref{CRC} is sharp. One can construct complete $n-2$ dimensional families of smooth curves immersed into $\mb{P}^n$ of any genus and any degree by embedding the total space by a very ample bundle into a large projective space, and then projecting down to $\mb{P}^n$ via a generic projection \cite[Example 2.2]{CR1}. However, we suspect that the bound in Theorem \ref{GPThmI} is not optimal in general.

\subsection{Related results}
\label{relatedr}
Since Theorem \ref{positivegenus} applies only to positive genus curves, we collect a couple of results in the literature that can be applied to rational curves. First, Chang and Ran have an earlier result that applies to curves of any genus.

\begin{Thm}
[{\cite[Theorem 2.1]{CR1}}]
\label{CR1}
A complete family of smooth curves (of any genus) embedded into $\mb{P}^n$ is dimension at most $n-1$, except for the case of lines.
\end{Thm}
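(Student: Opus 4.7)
Our plan is to argue by contradiction. Suppose $B$ parameterizes a complete family of smooth embedded curves of common degree $d \geq 2$ in $\mb{P}^n$ with $\dim B \geq n$. We form the incidence correspondence $\mc{I} = \{(b, p) \in B \times \mb{P}^n : p \in C_b\}$, which has dimension $\dim B + 1 \geq n+1$, together with its second projection $\ev: \mc{I} \to \mb{P}^n$. After possibly replacing $\mb{P}^n$ by the common linear span of the curves in the family (which only shrinks the ambient dimension and preserves being embedded of the same degree), we may assume $\ev$ is dominant, so for a general point $p \in \mb{P}^n$ the fiber $B_p := \ev^{-1}(p)$ is a complete subvariety of $B$ of positive dimension.

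We would then project from $p$ via $\pi_p: \mb{P}^n \dashrightarrow \mb{P}^{n-1}$. For each $b \in B_p$, the strict transform of $C_b$ on the blowup of $\mb{P}^n$ at $p$ is still smooth and embedded (since $C_b$ itself is), and its image in $\mb{P}^{n-1}$ is a curve of degree $d-1$. For a generic choice of $p$ one expects these projected curves to be immersed and pairwise distinct, so that $B_p$ injects into a family of curves in $\mb{P}^{n-1}$ of strictly smaller degree. A descending induction on $n$, with base case $n = 2$ (where the discriminant has codimension one inside the linear system of plane curves of any fixed degree $\geq 2$, precluding a complete positive-dimensional family of smooth members), should then yield the contradiction, since at each step the induced complete family would have dimension at least $\dim B_p \geq 1$ while the induction hypothesis forces it to be bounded above by a decreasing quantity.

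The main obstacle we anticipate is controlling the fibers of the map $b \mapsto \pi_p(C_b)$: if two embedded curves $C_b, C_{b'}$ through a general $p$ project to the same curve in $\mb{P}^{n-1}$, they must agree away from $p$, a strong rigidity condition that should force $C_b = C_{b'}$ for smooth embedded curves of degree $\geq 2$. Making this rigidity precise, so that the induced family in $\mb{P}^{n-1}$ remains positive-dimensional and the induction can actually be invoked, is the step where the \emph{embedded} (rather than merely immersed) hypothesis is essential. This is also the reason the bound obtained here is $n - 1$ rather than the sharper $n - 2$ available in the immersed positive-genus setting of Theorem \ref{positivegenus}: losing embeddedness under projection is precisely what prevents us from pushing the induction one step further.
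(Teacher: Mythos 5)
This statement is one the paper quotes from \cite[Theorem 2.1]{CR1} and does not prove; the machinery the paper does develop for its own results --- the Chern class identity of Lemma \ref{obstruction}, nefness of a suitable divisor, and positivity of $\int_{f^{-1}(p)}K^{b}$ via the Gauss/curvilinear map of Lemma \ref{curvilinear} and Corollary \ref{gauss} --- is the Chang--Ran route, which is intersection-theoretic on the total space $\ms{C}$ and does not involve projecting the curves. Your proposal takes a genuinely different, projection-based route, but as structured it cannot close. The decisive problem is the dimension bookkeeping in the induction. From $\dim B\geq n$ you get $\dim \mc{I}\geq n+1$ and hence only $\dim B_p\geq 1$ for general $p$: passing to the subfamily through $p$ costs you $n-1$ dimensions, while the inductive bound in $\mb{P}^{n-1}$ improves by only $1$. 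For $n\geq 3$ the induced family of dimension $\geq 1$ is perfectly compatible with the inductive bound $n-2\geq 1$, so no contradiction arises; and iterating the construction (projecting again from a general point of $\mb{P}^{n-1}$) leaves a subfamily of dimension $\geq 3-n\leq 0$, so the descent to the base case $n=2$ never happens. Some additional mechanism --- precisely what the Chern class identity supplies --- is needed to convert ``every fiber of $f$ is positive-dimensional'' into a contradiction.

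There are secondary gaps as well. Projection of $C_b$ from a point $p\in C_b$ typically produces a \emph{singular} image curve in $\mb{P}^{n-1}$: every secant line of $C_b$ through $p$ creates a node of the image (for $n=3$ this is unavoidable for $d\geq 3$), so the projected family consists of neither embedded nor smooth curves, and the inductive hypothesis --- stated for embedded smooth curves --- simply does not apply to it. You acknowledge losing embeddedness, but this does not merely weaken the bound; it invalidates the inductive step. The reduction ``replace $\mb{P}^n$ by the common linear span to make $\ev$ dominant'' is also incorrect: the curves can sweep out a nonlinear proper subvariety (e.g.\ curves on a quadric surface in $\mb{P}^3$) while spanning all of $\mb{P}^n$, so non-dominance of $\ev$ is not repaired by passing to the span. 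Finally, when $d=2$ the projected curves are lines, the excluded case of the theorem, so the induction on degree also needs a separate argument there. The base case you give for $n=2$ (the complement of the discriminant hypersurface is affine, hence contains no positive-dimensional complete subvariety) is fine, but the path down to it is not.
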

Theorem \ref{CR1} is stated in a slightly weaker form for simplicity. The original statement allowed for finitely many total singular points of the image curves across \emph{all} the members of the family. 

Second, there is a result of DeLand on complete families of rational curves whose span is the maximum possible for the degree (e.g. rational curves that are rational normal curves in a linear subspace), which is proven by pulling back ample divisor classes from a flag variety \cite[Theorem 1.1]{deLand}. Even though his result is implied by Theorem \ref{CR1}, there is a discussion of interesting problems where one might expect better bounds than Theorem \ref{CRC} given his stronger assumptions \cite[Section 1.1]{deLand}.

\subsection{Acknowledgements}
The author would like to thank Ziv Ran for helpful comments and pointing out that the proof of Lemma \ref{nef} can be simplified and David Yang for making the author aware of the problem. The author would also like to thank Paolo Aluffi for helpful discussions and for the references on the positivity of the chern classes of a generalized flag variety. 

\section{General setup}
\label{setup}
 We revisit the basic setup by Chang and Ran \cite{CR2}, before applying it in two situations. However, we first need to precisely define what we mean by a complete family of curves. 

For example, in our definition of a complete family of immersed curves, we don't want to include the case of a family that is pulled back from a smaller dimensional base.


\begin{Def}
\label{familydef}
By a complete family of immersed curves in a smooth variety $X$, we mean a diagram
\begin{center}
\begin{tikzcd}
\ms{C} \arrow[r,"f"] \arrow[d,"\pi"] & X\\
B &
\end{tikzcd}
\end{center}
where $B$ is integral and proper over $k$, $\pi$ is proper and smooth of relative dimension 1 with connected fibers, and $f$ is an immersion when restricted to each fiber $C_b$ of $\pi$ for $b\in B$. 

As in \cite{CR2}, we impose the condition that the rational map $B\dashrightarrow {\rm Hilb}_{X}$ is generically finite. More precisely, there is a dense open set $U\subset B$ for which there is a map $U\to \Hilb_X$ that sends $b\in B$ to the image $f(\pi^{-1}(b))\subset X$. See Proposition \ref{HilbIm} below for a proof of the existence of such a map. 


This condition of the family not arising from pullback of a smaller dimensional base is also referred to as the family being \emph{nondegenerate} \cite{CR2} or of \emph{maximal moduli} \cite{deLand}. The dimension of the family is the dimension of $B$.  
\end{Def}

For the rest of the paper, by a complete family, we mean a complete family of curves in the sense of Definition \ref{familydef}.

\subsection{Conventions}
Unless stated otherwise, we will let
\begin{enumerate}
\item
our base field is an algebraically closed field $k$ of arbitrary characteristic
\item
$X$ is a smooth variety of dimension $n$
\item
$\pi:\ms{C}\to B$ and $f:\ms{C}\to X$ give a complete family of curves
\item
$K$ is the divisor associated to the relative sheaf of differentials of $\pi: \ms{C}\to B$
\item 
$T_{\pi}$ is the relative tangent sheaf of $\ms{C}\to B$, so in particular $K=-c_1(T_{\pi})$
\end{enumerate}

When we take the projectivization $\mb{P}(V)$ of a vector bundle, we mean the projective bundle of lines in $V$, rather than 1-dimensional quotients. For convenience and to avoid notational clutter, we will refer to $K$ and all its pullbacks as just $K$. Similarly, if $X=\mb{P}^n$ and $H$ is the hyperplane class, we will often drop the pullback notation when the choice of map is unambiguous. 
\subsection{Chern class obstruction}
First, the obstruction to the family $\ms{C}$ is the following equality, which is a straightforward generalization of the the case $X=\mb{P}^n$ in \cite[Proof of theorem]{CR2}. 
\begin{Lem} 
\label{obstruction}
Given a complete family of curves $f:\ms{C}\to X$ in a smooth variety $X$, we must have the identity
\begin{align*}
(K^n + f^{*}c_1(TX)K^{n-1}+\cdots+f^{*}c_n(TX))\cap [\ms{C}]&=0
\end{align*}
on the total space $\ms{C}$. 
\end{Lem}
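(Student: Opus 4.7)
The plan is to identify the relative normal sheaf of the immersion as a genuine vector bundle of rank $n-1$ on $\ms{C}$, and extract the identity from the vanishing of its top Chern class.

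First, I would consider the derivative map $df: T_\pi \to f^{*}TX$ as a map of coherent sheaves on $\ms{C}$. Since $f$ is an immersion when restricted to each fiber $C_b$ of $\pi$, the induced map on fibers of $\pi$ sends the tangent line bundle $T_{C_b}$ injectively into the rank $n$ bundle $f^{*}TX|_{C_b}$. Because fibers of $\pi$ cover all points of $\ms{C}$, this pointwise injectivity at every $p \in C_b$ is precisely pointwise injectivity of $df$ at every point of $\ms{C}$. A map from a line bundle to a vector bundle that is nonzero at every point is automatically a subbundle inclusion, so we obtain a short exact sequence of vector bundles
\begin{align*}
0 \to T_\pi \to f^{*}TX \to N \to 0
\end{align*}
on $\ms{C}$, where $N$ is the relative normal bundle and has rank $n-1$.

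Next, I would apply the Whitney formula to this sequence. Using $c(T_\pi) = 1 + c_1(T_\pi) = 1 - K$ and expanding formally,
\begin{align*}
c(N) = \frac{c(f^{*}TX)}{c(T_\pi)} = c(f^{*}TX)\cdot\sum_{j\geq 0} K^{j}.
\end{align*}
Extracting the degree $n$ component and using naturality of Chern classes yields
\begin{align*}
c_n(N) = K^{n} + f^{*}c_1(TX)\,K^{n-1} + \cdots + f^{*}c_n(TX).
\end{align*}
Since $N$ has rank $n-1$, the class $c_n(N)$ vanishes in $A^{*}(\ms{C})$, which gives exactly the stated identity after capping with $[\ms{C}]$.

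I expect the only mildly subtle step to be the first one, namely verifying that the fiberwise immersion hypothesis promotes $df$ to a subbundle inclusion on all of $\ms{C}$, rather than just a sheaf injection. This is however routine, reducing to the fact that a nowhere-vanishing section of $\sheafhom(T_\pi, f^{*}TX)$ defines a subbundle. The remaining content is a standard Chern-class manipulation, and no input from the ``maximal moduli'' condition on $B \dashrightarrow \Hilb_X$ is needed for this lemma; that hypothesis should instead enter when the identity is combined with positivity arguments to bound $\dim B$.
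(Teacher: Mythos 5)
Your proof is correct and is essentially the paper's argument: both rest on the observation that the fiberwise immersion hypothesis makes $df\colon T_{\pi}\to f^{*}TX$ nowhere vanishing as a bundle map on $\ms{C}$, and then extract the identity by a standard Chern class computation. The paper phrases the conclusion as the vanishing of the top Chern class $c_n(f^{*}TX\otimes T_{\pi}^{*})$ of a rank-$n$ bundle admitting a nowhere-zero section, while you pass to the rank-$(n-1)$ quotient bundle $N$ and use $c_n(N)=0$; these amount to the same calculation.
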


\begin{proof}
The map $f:\ms{C}\to X$ induces a map of vector bundles $T_{\pi}\to f^{*}T X$ over $\ms{C}$, where $T_{\pi}$ is the relative tangent sheaf to the family $\ms{C}\to B$. The assumption that $\ms{C}\to X$ is immersed on each fiber implies that this map is nonvanishing everywhere. In other words, we must have
\begin{align*}
c_{n}(f^{*}TX\otimes T_{\pi}^{*})\cap [\ms{C}]&=0\\
(K^n + f^{*}c_1(TX)K^{n-1}+\cdots+f^{*}c_n(TX))\cap [\ms{C}]&=0.
\end{align*}
\end{proof}

\subsection{Gauss map and curvilinear schemes}
We have to somehow use the fact that a family of curves in Definition \ref{familydef} has the property that $f: \ms{C}\to X$ induces a generically finite rational map $B\dashrightarrow {\rm Hilb}_{X}$. The essential idea is that the truncation map from a space of curvilinear subschemes of $X$ has fibers that are affine spaces, and so the fibers cannot contain positive dimensional complete subvarieties. This idea was already used to prove a similar statement \cite[Proposition 1.5]{CR1}.

The goal is to show that the Gauss map is generically finite after restricting to a general point of the image, and this is given by Lemma \ref{curvilinear}. 
\begin{Lem} 
\label{curvilinear}
Given a complete family of curves $\ms{C}\to X$, for general $p\in f(\ms{C})$, the Gauss map $\gamma: f^{-1}(p)\to \mb{P}(T_pX)$ that sends each point $q\in f^{-1}(p)$ to the 1-dimensional subspace of $T_pX$ corresponding to the image of $T_qC_{\pi(q)}$ in $T_pX$ under $f$ is generically finite. 
\end{Lem}
The proof of Lemma \ref{curvilinear} is in Appendix \ref{appendix1}. The idea is unchanged, but we cannot just cite \cite[Proposition 1.5]{CR1} because our varieties are parameterized by the map $f$ instead of embedded in $X$ and the image of $f$ in $B\times X$ could both be singular and not flat over $B$. 

Lemma \ref{curvilinear} will be applied in the form of Corollary \ref{gauss}. 
\begin{Cor}
\label{gauss}
As in Lemma \ref{curvilinear}, let  $\ms{C}\to X$ be a complete family of curves,  $p\in f(\ms{C})$ be general and $\gamma: f^{-1}(p)\to \mb{P}(T_pX)$ be the Gauss map, if $b$ is the generic fiber dimension of $f:\ms{C}\to X$, then
\begin{align*}
\int_{f^{-1}(p)}{K^{b}}>0. 
\end{align*}
\end{Cor}

\begin{proof}
Consider the map $\gamma: f^{-1}(p) \to \mb{P}(T_pX)$ that sends each point $q\in f^{-1}(p)$ to the 1-dimensional subspace of $T_pX$ corresponding to the image of $T_qC_{\pi(q)}$ in $T_pX$ under $f$.  Since the map $\gamma$ is generically finite by Lemma \ref{curvilinear}, this implies 
\begin{align*}
\int_{f^{-1}(p)}(\gamma^{*}H)^{b}>0,
\end{align*} 
where $H$ is the hyperplane class in $\mb{P}(T_pX)$. To finish, we want to identify $f^{*}H$ with $K$. 

By considering the map $T_{\pi}|_{f^{-1}(p)}\to T_pX$, we see the condition of lying in a hyperplane is the condition that $T_{\pi}|_{f^{-1}(p)}\to k$ vanishes, where $k$ is a 1-dimensional vector space corresponding to a 1-dimensional quotient of $T_pX$. Furthermore, the zero locus of the map $T_{\pi}|_{f^{-1}(p)}\to k$ vanishes in the correct dimension since $\gamma$ is generically finite. Since this is the zero locus of a section of $T_{\pi}^{*}|_{f^{-1}(p)}$, the hyperplane class in $\mb{P}(T_{p}X)$ pulls back to the class of $K$ on $f^{-1}(p)$.
\end{proof}

\section{Positivity of the tangent bundle}
We identify a condition on the tangent bundle of $X$ that allows us to use the setup in Section \ref{setup} to rule out large families of positive genus curves immersed into $X$. 
\begin{Thm}
\label{posTX}
Suppose $X$ is a smooth, $n$-dimensional projective variety such that $TX$ is globally generated and has the property that
\begin{align*}
\int_{X}{c_i(X)\cap [Z]}>0
\end{align*}
for all effective cycles $[Z]$ of dimension $i$. Then, any complete family of positive genus curves in $X$ is dimension at most $n-2$. 
\end{Thm}

Since the proof of Theorem \ref{posTX} is similar to \cite[Theorem 1]{CR1}, we will not include it here, but put it in Appendex \ref{appendix2} for completeness. The assumption of positive genus is used to conclude that $K$ is numerically effective \cite[Theorem 4.1]{Mumford}.

Suppose the base field is $\mb{C}$. Then, is a smooth variety $X$ with globally generated tangent bundle is a homogenous space \cite[Proposition 2.1]{Campana} and isomorphic to the product of an abelian variety and a product of generalized flag varieties $G_i/P_i$ by the Borel-Remmert theorem \cite{BR}. 

An abelian variety has trivial tangent bundle, so it fails the additional positivity imposed by Theorem \ref{posTX}. Indeed, the conclusion of Theorem \ref{posTX} fails for an abelian variety as translating within an abelian variety of dimension $n$ can yield $n$-dimensional families of embedded curves. Therefore, it remains to ask whether a product of generalized generalized flag varieties over $\mb{C}$ satisfies the conditions of Theorem \ref{posTX}. We will show that it does in Section \ref{GP}. 


\begin{Exam} (Quadric hypersurfaces)
Here is a simple proof communicated by Paolo Aluffi that a smooth quadric hypersurface $Q\subset\mb{P}^{n+1}$ satisfies the conditions of Theorem \ref{posTX}. Over $\mb{C}$, this is a special case of Section \ref{GP}. 

From the short exact sequence $0\to TQ\to T\mb{P}^{n+1}|_{Q}\to N_{Q/\mb{P}^{n+1}}\to 0$, the total chern class of $TQ$ is
\begin{align*}
c(TQ)=\frac{(1+H)^{n+2}}{(1+2H)}, 
\end{align*}
where $H$ is the restriction of $\ms{O}_{\mb{P}^{n+1}}(1)$ to $Q$. The obvious problem is that $\frac{1}{1+2H}$ has negative terms when expanded as a power series. Given $0\leq i\leq n$, we will show $c_i(TQ)$ is a positive integer multiple of $H^k$. Consider the difference 
\begin{align*}
\alpha&=[TQ]-[\ms{O}_Q^{n-i+1}]\\
&=[\ms{O}_{Q}(1)^{n+2}]-[\ms{O}_Q(2)]-[\ms{O}_Q^{n-i+2}]
\end{align*}
 in the Grothendieck group of vector bundles over $Q$. Then, by \cite{Aluffitensor},
\begin{align*}
c_i(\alpha)&=c_i(\alpha\otimes \ms{O}_{Q}(-1)),
\end{align*}
and 
\begin{align*}
c(\alpha\otimes \ms{O}_{Q}(-1))&= \frac{(1)^{n+2}}{(1+H)(1-H)^{n-i+2}}=\frac{1}{(1-H^2)(1-H)^{n-i+1}},
\end{align*}
which has positive coefficients.
\end{Exam}

\section{Application 1: immersed curves in generalized flag varieties}
\label{GP}
In this section, we will apply Theorem \ref{posTX} when $X$ is a product of generalized flag varieties $G_i/P_i$. In addition to the conventions assumed in Section \ref{setup}, we will further assume that our base field is $\mb{C}$. 

\subsection{Positivity of the tangent bundle}
Crucially, we will need the positivity of the tangent bundle of a generalized flag variety $G/P$. Recall that the Bruhat decomposition of $G/P$ decomposes $G/P$ into Bruhat cells, each isomorphic to affine space. We let
\begin{enumerate}
\item
$G$ is a complex simple Lie group and $P\subset G$ is a parabolic
\item
$T\subset B\subset P\subset G$, where $T$ is a maximal torus, $B$ is a Borel subgroup, and $P$ is a parabolic
\item
$W=N_G(T)/T$ is the Weyl group, $W_P$ be the subgroup of $W$ generated by the reflections in $P$
\item
$W^P\subset W$ be the subset containing the unique element of minimal length in each right $W_P$-coset in $W$
\item
$e_{w P}=w P/P$ denote the $T$-fixed points of $G/P$, where $w\in W^P$
\item
$C_{w P}=Be_{w P}$ denote the Bruhat cell
\item
$X_{w P}=\ol{C_{w P}}$ denote the Schubert variety
\end{enumerate}

The Schubert varieties, indexed by $W^P$, form a free basis for the Chow group of $G/P$ and the opposite Schubert varieties form a dual basis under the intersection pairing \cite[Lemma 1 (1)]{Brion2}.

\begin{Prop}
\label{posGP}
If we write the total chern class $c(T(G/P))$ of a generalized flag variety as a sum of classes of Schubert cells, every Schubert cell appears with a positive coefficient.
\end{Prop}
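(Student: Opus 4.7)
The plan is to prove this as a direct consequence of the additivity of Chern-Schwartz-MacPherson (CSM) classes together with the positivity theorem for CSM classes of Schubert cells, conjectured by Aluffi-Mihalcea and established in the works of Aluffi, Mihalcea, Sch\"urmann, Su, Huh, and others.

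First, I would recall three standard facts about CSM classes. (i) Since $G/P$ is smooth and proper, $c_{SM}(G/P) = c(T(G/P)) \cap [G/P]$. (ii) The Bruhat decomposition $G/P = \bigsqcup_{w \in W^P} C_{wP}$ expresses $G/P$ as a disjoint union of constructible subsets, and CSM classes are additive over such decompositions, giving
\begin{align*}
c(T(G/P)) \cap [G/P] \;=\; c_{SM}(G/P) \;=\; \sum_{w \in W^P} c_{SM}(C_{wP}).
\end{align*}
(iii) By the positivity theorem for CSM classes of Schubert cells, for each $w \in W^P$ one has an expansion
\begin{align*}
c_{SM}(C_{wP}) \;=\; \sum_{v \in W^P,\; v \leq w} c(v;w)\,[X_{vP}]
\end{align*}
in the Schubert basis, with all coefficients $c(v;w) \geq 0$; moreover, the leading term is $[X_{wP}]$, so $c(w;w) = 1$.

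Combining these, the coefficient of a fixed Schubert class $[X_{vP}]$ in $c(T(G/P))$ is
\begin{align*}
\sum_{w \in W^P,\; w \geq v} c(v;w) \;\geq\; c(v;v) \;=\; 1 \;>\; 0,
\end{align*}
since every term in the sum is nonnegative and the term $w = v$ contributes $1$. This gives the desired positivity for all $v \in W^P$.

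The main obstacle is invoking the CSM positivity theorem, which is highly nontrivial in general (it uses either Lagrangian/characteristic cycle methods, positivity in the equivariant setting, or the geometry of Bott-Samelson resolutions depending on the approach). Everything else, including the additivity of $c_{SM}$ over constructible decompositions and the identification of $c_{SM}(G/P)$ with the total Chern class, is formal. Since this positivity statement is exactly what the commented-out alternative version of the proof cites from \cite{Aluffi}, I would simply quote it and not reprove it.
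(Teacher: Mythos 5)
Your proposal is correct and follows essentially the same route as the paper: both identify $c(T(G/P))\cap[G/P]$ with $c_{SM}(G/P)$, use additivity of CSM classes over the Bruhat decomposition, and invoke the nonnegativity of the coefficients $c(v;w)$ together with the leading coefficient $c(w;w)=1$ from the Aluffi--Mihalcea positivity result to conclude every Schubert class appears positively. Your write-up is in fact slightly more explicit than the paper's in isolating the coefficient of a fixed $[X_{vP}]$ as $\sum_{w\geq v}c(v;w)\geq 1$, but the argument is the same.
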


\begin{proof}
This is a corollary of \cite[Corollary 1.4]{Aluffi}, which states that the Chern-Schartz-MacPherson (CSM) class of a Bruhat cell in $G/P$ when written in the form
\begin{align*}
c_{SM}(C_{w P})=\sum_{v\in W^P, v\leq w}{c(v;w)[X^{w P}]}
\end{align*}
has the property that $c(v;w)$ is always nonnegative. Since the leading term of $c_{SM}(C_{w P})$ is $[X_{w P}]$, $c(w,w)=1$. Since the CSM classes of the Bruhat cells add to the CSM class of $G/P$, and the CSM class of $G/P$ is the usual chern class of $G/P$, we know that if we write $c(T(G/P))$ in terms of the free basis of Schubert varieties, every schubert variety $[X_{wP}]$ will appear for all $w\in W^P$. 
\end{proof}

\begin{Prop}
\label{posGP2}
Let $X=G_1/P_1\times\cdots\times G_i/P_i$ be a product of generalized flag varieties over $\mb{C}$, then for all subvarieties $Y\subset X$,
\begin{align*}
\int_{G/P}{[Y]\cap c(TX)}>0,
\end{align*}
where $c(TX)=1+c_1(TX)+\cdots$ is the total chern class.
\end{Prop}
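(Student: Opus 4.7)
The plan is to reduce to Proposition \ref{posGP} by Künneth and then use Kleiman transversality on the transitive group action. First, note that only the codimension $d = \dim Y$ piece of $c(TX)$ contributes to $\int_X [Y]\cap c(TX)$, so the statement is equivalent to $\int_X [Y]\cap c_d(TX) > 0$ for every irreducible $Y\subset X$ of dimension $d$. I will show this by expanding $c_d(TX)$ in the Schubert basis of $X$ with strictly positive coefficients, and showing that $[Y]$ pairs nonnegatively with every Schubert class of complementary dimension, with at least one pairing strictly positive.

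First I would expand $c(TX)$ in the Schubert basis. Since $TX = \bigoplus_{j} p_j^{*}T(G_j/P_j)$, the total Chern class factors as $c(TX) = \prod_{j} p_j^{*}c(T(G_j/P_j))$. Proposition \ref{posGP} gives $c(T(G_j/P_j)) = \sum_{w\in W^{P_j}} a_w^{(j)} [X_{wP_j}]$ with every $a_w^{(j)} > 0$. The Künneth formula for Chow rings of cellular varieties identifies the Schubert basis of $X$ with products of Schubert classes from each factor, so expanding the product yields
\begin{align*}
c(TX) = \sum_{(w_1,\ldots,w_i)} \Bigl(\prod_j a_{w_j}^{(j)}\Bigr)\,[X_{w_1 P_1}]\times\cdots\times[X_{w_i P_i}],
\end{align*}
and every coefficient is a product of positive numbers, hence strictly positive. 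Truncating to codimension $d$ gives $c_d(TX) = \sum_\alpha b_\alpha [X_\alpha]$ with $b_\alpha > 0$, summed over Schubert classes $[X_\alpha]$ of codimension $d$ in $X$.

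Next I would verify that $[Y]\cdot[X_\alpha]\ge 0$ for each such $\alpha$. Since the base field is $\mb{C}$ and $G := G_1\times\cdots\times G_i$ acts transitively on $X$, Kleiman's transversality theorem applies: a generic translate $g\cdot X_\alpha$ meets $Y$ properly (and transversally along the smooth locus of both), and the intersection number equals the (nonnegative) number of transverse intersection points. Therefore every term $b_\alpha\cdot([Y]\cdot[X_\alpha])$ in
\begin{align*}
\int_X [Y]\cap c_d(TX) = \sum_\alpha b_\alpha\,([Y]\cdot[X_\alpha])
\end{align*}
is nonnegative. Finally, at least one term is strictly positive: the Schubert classes form a free basis of the Chow group of $X$, and the opposite Schubert classes form the Poincaré dual basis (as recalled from \cite[Lemma 1 (1)]{Brion2}, and the analogous duality for products). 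Since $[Y]\ne 0$ in $A_d(X)$, some dual pairing $[Y]\cdot[X_\alpha]$ is nonzero, hence positive. Combined with $b_\alpha>0$ this gives strict positivity.

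The only step that requires any real care is the nonnegativity in Kleiman's moving argument, because one must be sure the group acts transitively (which it does, as $X$ is a homogeneous space for $G_1\times\cdots\times G_i$) and that the conclusion of Kleiman applies to arbitrary closed subvarieties $X_\alpha$ (which it does in characteristic zero). Everything else is bookkeeping with the Künneth decomposition and with the trivial observation that only the codimension $d$ part of $c(TX)$ contributes to the pairing with $[Y]$.
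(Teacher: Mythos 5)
Your proof is correct and follows essentially the same route as the paper's: expand $c(TX)$ in the Schubert basis with positive coefficients via Proposition \ref{posGP} and the K\"unneth decomposition for cellular varieties, then pair with $[Y]$ using the duality between Schubert and opposite Schubert classes. The only cosmetic difference is that you justify the nonnegativity of each pairing $[Y]\cdot[X_\alpha]$ by Kleiman transversality, whereas the paper states the equivalent fact that $[Y]$ is a nonnegative combination of opposite Schubert classes.
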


\begin{proof}
Since each $G_i/P_i$ has a cellular decomposition by Bruhat cells, so does $X$. The closures of products of Bruhat cells form a free basis for $A_{*}(X)$, and the closures of products of opposite Bruhat cells form a dual basis. Therefore, $[Y]$ is rational equivalent to a nonnegative combination of products of opposite Schubert cells. Since $X$ is projective, $[Y]$ cannot be zero, so if we write $[Y]$ as a linear combination of classes of products of opposite Schubert cells, at least one of the coefficients is positive. 

Finally, Proposition \ref{posGP} shows that if we write 
\begin{align*}
c(TX)=\pi_1^{*}c(T(G_1/P_1))\cdots \pi_{i}^{*}c(T(G_i/P_i))
\end{align*}
as a linear combination of products of Schubert cells, where the $\pi_i$ are the projections $X\to G_i/P_i$, then all the coefficients are positive. Therefore, $c(TX)$ integrates positively when capped with any product of opposite Schubert cells, so 
\begin{align*}
\int_{G_1/P_1\times\cdots\times G_i/P_i}{[Y]\cap c(TX)}>0.
\end{align*}
\end{proof}

Proposition \ref{posGP} and Theorem \ref{posTX} yield
\begin{Thm}
\label{thmGP}
Let $X=G_1/P_1\times\cdots\times G_i/P_i$ be a product of generalized flag varieties over $\mb{C}$. If $X$ is $n$-dimensional, then any complete family of positive genus curves is dimension at most $n-2$. 
\end{Thm}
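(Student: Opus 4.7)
The plan is to verify that $X=G_1/P_1\times\cdots\times G_i/P_i$ satisfies the two hypotheses of Theorem \ref{posTX} and then directly invoke that theorem to conclude.

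First, I would check that $TX$ is globally generated. Each factor $G_j/P_j$ is a homogeneous space under the action of $G_j$, so the differential of the orbit map at any point yields a surjection $\mf{g}_j\otimes \OO_{G_j/P_j}\twoheadrightarrow T(G_j/P_j)$ from a trivial bundle. Hence each $T(G_j/P_j)$ is globally generated, and the same holds for $TX=\bigoplus_j \pi_j^{*}T(G_j/P_j)$ on the product, where $\pi_j: X\to G_j/P_j$ is the projection.

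Second, I need to verify the positivity condition: for every effective cycle $[Z]$ of dimension $m$, $\int_X c_m(TX)\cap [Z]>0$. The Bruhat decompositions of the factors give a cellular decomposition of $X$ whose closures (products of Schubert varieties) form a free $\mb{Z}$-basis of $A_{*}(X)$, with products of opposite Schubert varieties as dual basis. Any effective class is therefore a nonnegative combination of products of opposite Schubert classes, and since $X$ is projective and $[Z]\neq 0$, at least one coefficient is strictly positive. By Proposition \ref{posGP} applied to each factor, $c(T(G_j/P_j))$ expressed in the Schubert basis has a strictly positive coefficient on every Schubert class; the Künneth expansion
\begin{align*}
c(TX)=\prod_{j}\pi_j^{*}c(T(G_j/P_j))
\end{align*}
then forces every product Schubert class to appear with strictly positive coefficient in $c(TX)$, and in particular in each graded piece $c_m(TX)$, since the Schubert basis is homogeneous with respect to codimension. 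Pairing $c_m(TX)$ with $[Z]$ via the dual Schubert basis therefore yields a strictly positive integer.

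With both hypotheses verified, Theorem \ref{posTX} immediately implies that any complete family of smooth positive genus curves immersed into $X$ has dimension at most $n-2$. The only conceptually nontrivial ingredient is the positivity statement for the Chern classes on each factor, which is carried by Proposition \ref{posGP} (and ultimately by Aluffi's nonnegativity result for CSM classes of Bruhat cells); once that is in hand, the rest is a formal Künneth argument on the product plus a direct reduction to Theorem \ref{posTX}. The main obstacle, therefore, is really absorbed into the earlier Proposition \ref{posGP2}, so here the argument reduces to assembling the pieces.
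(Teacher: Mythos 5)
Your proposal is correct and follows essentially the same route as the paper: verify the hypotheses of Theorem \ref{posTX} via the Schubert-basis positivity of $c(TX)$ on each factor (Proposition \ref{posGP}) and the K\"unneth/dual-basis argument on the product (Proposition \ref{posGP2}). You are in fact slightly more careful than the paper in explicitly checking global generation of $TX$ via the orbit maps, which the paper leaves implicit.
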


\begin{proof}
Proposition \ref{posGP2} shows that setting $X=G_1/P_1\times\cdots\times G_i/P_i$ satisfies the conditions of Theorem \ref{posTX}. 
\end{proof}

\section{Application 2: Immersed rational curves in projective spaces}
\label{Pn}
We will now specialize the setup in Section \ref{setup} to the case of rational curves in $X=\mb{P}^n$. We cannot apply Theorem \ref{posTX} in this case because Theorem \ref{posTX} uses the condition of positive genus to conclude $K$ is nef. However, in the case $X=\mb{P}^n$, we can extend the result to genus zero by observing that, except in the case of lines, $K+H$ is nef, where $H$ is the pullback of the hyperplane class in $\mb{P}^n$ to $\ms{C}$. 


We can and will assume $\ms{C}\cong \mb{P}(V)$ for some rank 2 vector bundle $V$ over $B$ \cite[Proposition 2.1]{deLand}.  For the rest of Section \ref{Pn} in addition to the conventions in Section \ref{setup}, we will further assume:
\begin{enumerate}
\item
$X=\mb{P}^n$
\item
$\ms{C}\xrightarrow{\pi} B$ is isomorphic to $\mb{P}(V)\to B$ for a rank 2 vector bundle $V$ over $B$.
\end{enumerate}
In particular, the results of this section do not depend on the characteristic of the base field. Lemma \ref{obstructionPn} can be deduced from Lemma \ref{obstruction} or recalled from \cite[Proof of Theorem]{CR2}.

\begin{Lem} 
\label{obstructionPn}
Given a complete family of curves $f:\ms{C}\to \mb{P}^n$, we must have the identity
\begin{align*}
H^n+H^{n-1}(K+H)+\cdots+(K+H)^n&=0
\end{align*}
on the total space $\ms{C}$, where $H$ is the hyperplane class on $\mb{P}^n$ (pulled back to $\ms{C}$ via $f$). 
\end{Lem}

One can view Lemma \ref{obstructionPn} as saying $T\mb{P}^n$ is positive beyond what Theorem \ref{posTX} requires, which will allow us to extend the argument to genus zero. 

\subsection{$K+H$ is nef}
\begin{Lem}
\label{nef}
Let $B$ be a proper variety, $V$ be a rank 2 vector bundle on $B$, $H$ be a nef divisor on $\mb{P}(V)$, and $K$ be the divisor corresponding to the relative sheaf of differentials of $\mb{P}(V)\xrightarrow{\pi}B$. If $H$ restricts to a divisor of degree at least 2 on each fiber of $\pi$, then $H+K$ is nef. 
\end{Lem}

\begin{proof}
We will show this by intersecting with each test curve. Let $D$ be a smooth curve and $i: D\to \mb{P}(V)$ be a map. 
After pulling back the bundle $\mb{P}(V)\to B$ via the map $D\to B$, we get a ruled surface with a section mapping to $i(D)$ in $\mb{P}(V)=\ms{C}$. To summarize, we have the following diagram
\begin{center}
\begin{tikzcd}
\mb{P}(j^{*}V) \arrow[r, "h"] \arrow[d,"\pi_D"] \arrow[rr, bend left=25, "g"] &  \mb{P}(V) \arrow[d, "\pi"] \arrow[r,"f"] & \mb{P}^n\\
D \arrow[u, bend left=20, "s"] \arrow[r, "j"] \arrow[ru,"i"] & B
\end{tikzcd}
\end{center}
We want to show $\deg(i^{*}(K+H))\geq 0$. In this way, we have reduced our problem to a question about a section on a ruled surface over $D$. 

So now assume $D\cong B$ and $i: D\to \mb{P}(V)$ is a section of $\pi: \mb{P}(V)\to B$. We want to show $(K+H)\cdot D\geq 0$. Since $D$ is a section of $\pi$, the normal bundle of $D$ in $\mb{P}(V)$ is the relative tangent bundle of $\pi$ restricted to $D$, so its first chern class is $-K|_D$. Therefore, 
\begin{align*}
D^2=-K\cdot D\Rightarrow (K+H)\cdot D=-D^2+H\cdot D.
\end{align*}
If $D^2<0$, then $-D^2+H\cdot D\geq -D^2> 0$, so we can assume $D^2\geq 0$. 

By the projective bundle theorem \cite[Theorem 3.3]{Fulton}, the N\'eron-Severi group $N^1(\mb{P}(V))_{\mb{R}}$ is generated by $D$ and a fiber $F$ of $\pi$. Let $H=a(D+bF)$ in $N^1(\mb{P}(V))_{\mb{R}}$, so $b$ is the ``slope'' of $H$ in $N^1(\mb{P}(V))_{\mb{R}}$. By Kleiman's theorem \cite[Theorem 1.4.9]{Pos1}, $H^2\geq 0$, so $b\geq -\frac{D^2}{2}$. 

Now, we expand 
\begin{align*}
(K+H)\cdot D&=(-D+H)\cdot D=(a-1)D^2+ab\\
&\geq (a-1)D^2-a\frac{D^2}{2}=(\frac{a}{2}-1)D^2\geq 0,
\end{align*}
where we used the assumption $a\geq 2$ at the very last step. 
\end{proof}

\subsection{Conclusion of the argument}

The proof of Theorem \ref{posTX} is a simple modification of \cite[Theorem 1]{CR1}, but we include it for the sake of completeness and because the proof is short. 
\begin{Thm}
\label{PnT}
Given a complete family of rational curves in $\mb{P}^n$
\begin{center}
\begin{tikzcd}
\ms{C}\cong \mb{P}(V) \arrow[d,"\pi"] \arrow[r,"f"] & \mb{P}^n\\
B &
\end{tikzcd}
\end{center}
we must have $\dim(B)\leq n-2$, except in the case where $f$ restricts to a degree one map on each fiber.
\end{Thm}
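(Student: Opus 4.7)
The plan is to mirror the argument of Theorem \ref{posTX}, but replacing the role of $K$ (which is not nef when the fibers are rational) with $K+H$. By Lemma \ref{nef}, $K+H$ is nef as soon as the degree $d$ of $f$ on a fiber is at least $2$; the excluded case $d=1$ corresponds precisely to families of lines. The obstruction of Lemma \ref{obstruction} has the fortunate property, recorded in Lemma \ref{obstructionPn}, of being rewritable as a sum of products of the two nef classes $H$ and $K+H$. This is the extra positivity of $T\mb{P}^n$ that is not captured by Theorem \ref{posTX} and is precisely what will allow us to reach genus zero.

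Assume for contradiction $\dim(B)\geq n-1$. By slicing $B$ with general very ample divisors we may reduce to $\dim(B)=n-1$ and $\dim(\ms{C})=n$; the projective bundle structure $\ms{C}\cong\mb{P}(V)$, the fiberwise degree $d\geq 2$, and the generic finiteness of $B\dashrightarrow\Hilb_{\mb{P}^n}$ are all preserved under base change. Lemma \ref{obstructionPn} then reads
\begin{align*}
\int_{\ms{C}}\bigl(H^n+H^{n-1}(K+H)+\cdots+(K+H)^n\bigr)=0,
\end{align*}
and each of the $n+1$ summands on the left is nonnegative, since it is an intersection of nef classes on the proper variety $\ms{C}$. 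It therefore suffices to exhibit one summand that is strictly positive.

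Let $b=\dim f(\ms{C})$. The summand I single out is $\int_{\ms{C}}H^b(K+H)^{n-b}$. Cutting with $b$ general hyperplanes in $\mb{P}^n$ and applying the projection formula, this integral equals $\sum_{p\in S}\int_{f^{-1}(p)}(K+H)^{n-b}$, where $S$ is the finite set $f(\ms{C})\cap\{s_1=\cdots=s_b=0\}$ and each fiber $f^{-1}(p)$ has dimension $n-b$. Since $H=f^{*}\OO_{\mb{P}^n}(1)$ restricts to zero on each fiber $f^{-1}(p)$, we get $(K+H)^{n-b}|_{f^{-1}(p)}=K^{n-b}|_{f^{-1}(p)}$, and Corollary \ref{gauss} tells us every such integral $\int_{f^{-1}(p)}K^{n-b}$ is strictly positive. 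This contradicts the vanishing of the total sum.

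The key step, and the reason genus zero works in $\mb{P}^n$ at all, is the nefness of $K+H$ together with the rewriting of the Chern class obstruction purely in terms of $H$ and $K+H$. The only delicate ingredient beyond the general setup in Section \ref{setup} is Lemma \ref{nef}, proved by restricting to test curves and reducing to the ruled surface computation of Lemma \ref{ruledsurface}; that is where the hypothesis $d\geq 2$ (and hence the excluded case of lines) is actually used.
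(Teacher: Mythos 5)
Your proposal is correct and follows essentially the same route as the paper's own proof: the same reduction to $\dim(B)=n-1$, the same rewriting of the obstruction via Lemma \ref{obstructionPn}, the nefness of $K+H$ from Lemma \ref{nef} to get nonnegativity of all terms, and the same distinguished term $H^{b}(K+H)^{n-b}$ evaluated via general hyperplane sections and Corollary \ref{gauss}. The only difference is that you spell out explicitly why $(K+H)^{n-b}$ restricts to $K^{n-b}$ on the fibers $f^{-1}(p)$, which the paper leaves implicit.
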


\begin{proof}
Suppose for the sake of contradiction that $\dim(B)\geq n-1$. If necessary, we can slice $B$ by a general hyperplane class until $\dim(B)=n-1$ and $\dim(\ms{C})=n$. We apply Lemma \ref{obstructionPn} to get
\begin{align*}
\int_{\ms{C}}{H^n+H^{n-1}(K+H)+\cdots+(K+H)^n}&=0
\end{align*}
on $\ms{C}$. By Lemma \ref{nef} $K+H$ is nef, so the fact that intersection multiplicities of nef divisors is nonnegative \cite[Example 1.4.16]{Pos1} implies all the terms are nonnegative. We need to show one of the terms is positive. Let $s=\dim(f(\ms{C}))$. Then, if $S=\{p_1,p_2,\ldots, p_{\deg(f(\ms{C}))}\}$ is the finite set of points given by intersecting $f(\ms{C})$ with $s$ general hyperplanes, then
\begin{align*}
\int_{\ms{C}}{H^s(H+K)^{n-s}}=\sum_{p\in S}{\int_{f^{-1}(p)}K^{n-s}},
\end{align*}
and each term in the sum is positive by Corollary \ref{gauss}.
\end{proof}

\appendix
\section{Gauss map and curvilinear schemes}
\label{appendix1}
\begin{Def}
Let $\Curv^m(X)$ be the locus of curvilinear schemes in the Hilbert scheme of length $m$ subschemes of $X$.
\end{Def}

The key fact we will use about curvilinear schemes is the following:
\begin{Lem}
[{\cite[Lemma 1.1]{CR1}}]
\label{ab}
The fibers of the maps $\Curv^{m+1}(X)\to \Curv^m(X)$ are affine spaces for $m\geq 1$. 
\end{Lem}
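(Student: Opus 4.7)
The claim is local on $X$, so I would work in the completed local ring $\hat{\mc O}_{X,p}\cong k[[x_1,\ldots,x_n]]$ at a closed point, and, since a curvilinear scheme is a disjoint union of punctual pieces, the problem reduces to describing curvilinear length-$(m+1)$ extensions of a punctual length-$m$ scheme $Z$ at a single point $p\in X$. Such a $Z$ corresponds to a surjection $\varphi:\hat{\mc O}_{X,p}\twoheadrightarrow k[t]/(t^m)$ taken modulo the action of $\mathrm{Aut}(k[t]/(t^m))$ on the target, and a punctual curvilinear extension of length $m+1$ at $p$ corresponds to a lift $\tilde\varphi:\hat{\mc O}_{X,p}\twoheadrightarrow k[t]/(t^{m+1})$ of $\varphi$, taken modulo the subgroup $U\subset \mathrm{Aut}(k[t]/(t^{m+1}))$ of automorphisms that restrict to the identity on $k[t]/(t^m)$.

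Two standard observations then do the work. First, the set of lifts $\tilde\varphi$ is a torsor under $\mathrm{Der}_k(\hat{\mc O}_{X,p}, k)\cong k^n$ (a derivation is determined by its values on the coordinates $x_1,\ldots,x_n$), so the lifts form an $n$-dimensional affine space. Second, an automorphism of $k[t]/(t^{m+1})$ has the form $t\mapsto a_1 t+\cdots+a_m t^m$ with $a_1\ne 0$, and the requirement that reduction modulo $t^m$ be the identity forces $a_1=1$ and $a_2=\cdots=a_{m-1}=0$, leaving $a_m$ free, so $U\cong\mb G_a$. A direct calculation shows that $\sigma_c:t\mapsto t+ct^m$ acts on a lift $\tilde\varphi$ by sending $\tilde\varphi(x_i)$ to $\tilde\varphi(x_i)+c\,b_i\,t^m$, where $b_i$ is the coefficient of $t$ in $\varphi(x_i)$; surjectivity of $\varphi$ forces $(b_1,\ldots,b_n)$ to be nonzero, so $U$ acts freely on the affine space of lifts by translation along a one-dimensional subspace. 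The quotient is therefore an affine space of dimension $n-1$.

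The main obstacle I anticipate is globalization and edge cases: one needs to check that the pointwise, formal-local calculation assembles into a scheme-theoretic affine-bundle structure on the map $\Curv^{m+1}(X)\to\Curv^m(X)$ (which follows because both the derivation module and the unipotent group $U$ vary algebraically over the base), and that multi-support or very low-$m$ configurations are handled correctly by the Chang--Ran convention for $\Curv^m$, which reduces each fiber to a product of punctual fibers of the above type.
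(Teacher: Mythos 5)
The paper does not actually prove this statement: it is quoted directly from Chang--Ran, so there is no internal argument to measure yours against. Judged on its own, your reconstruction is essentially correct in the case the paper really uses, namely punctual fibers with $m\ge 2$. The identification of the fiber over $[\varphi\colon \hat{\mc{O}}_{X,p}\twoheadrightarrow k[t]/(t^m)]$ with (surjective lifts to $k[t]/(t^{m+1})$) modulo the group $U$ of automorphisms reducing to the identity is right; the lifts form a torsor under $\mathrm{Der}_k(\hat{\mc{O}}_{X,p},k)\cong T_pX\cong k^n$ (and for $m\ge 2$ every lift is automatically surjective, by the same power trick as in Lemma \ref{immersion}, so no locus needs to be deleted); $U=\{t\mapsto t+ct^m\}\cong\mb{G}_a$ acts by translation by $c\,(b_1,\ldots,b_n)\ne 0$; and the quotient is $\mb{A}^{n-1}$. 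All of these steps check out.

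Two caveats. First, your own computation shows the statement fails as literally written at $m=1$: there every automorphism of $k[t]/(t^2)$ reduces to the identity on $k[t]/(t)=k$, so $U=\mathrm{Aut}(k[t]/(t^2))\cong\mb{G}_m$, the surjective lifts are $\mb{A}^n\setminus\{0\}$, and the fiber is $\mb{P}^{n-1}$ --- consistent with the identification $\Curv^2(X)=\mb{P}(TX)$ used later in the paper, but not an affine space. This does no harm to the paper, since the proof of Lemma \ref{curvilinear} only descends generic finiteness from $f_m$ to $f_2$ and hence only invokes the maps $\Curv^{j+1}(X)\to\Curv^j(X)$ for $j\ge 2$, but the hypothesis in the statement should read $m\ge 2$. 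Second, for curvilinear schemes with disconnected support the truncation is not canonical, and the fiber over such a scheme is at best a disjoint union of affine spaces (one for each support point whose length may be increased); again this is harmless, both because the only property ever used is that fibers contain no positive-dimensional complete subvarieties and because the schemes produced in Proposition \ref{fm} are punctual, but your appeal to ``the Chang--Ran convention'' is doing real work there. Your closing remark about globalizing the formal-local computation to a scheme-theoretic statement is the right flag to raise; it would need to be carried out for a complete proof, though only the fiberwise statement is ever used.
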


\begin{Lem}
\label{immersion}
If ${\rm Spec}(k[t]/(t^{m+1}))\to X$ is an injection on tangent spaces, then it is a closed immersion.
\end{Lem}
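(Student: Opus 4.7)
The plan is to reduce the statement to showing that the induced map of local rings
\[
\phi:\mathcal{O}_{X,x}\longrightarrow R:=k[t]/(t^{m+1})
\]
is surjective, where $x\in X$ is the image of the unique closed point of $\Spec(R)$. Since $\Spec(R)$ is supported at the single point $x$, surjectivity of the map on stalks is precisely the condition that the morphism is a closed immersion (after passing to any affine neighborhood of $x$).

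First I would unpack the tangent space hypothesis. The tangent space of $\Spec(R)$ at its closed point is $\Hom_k((t)/(t^2),k)$, which is $1$-dimensional, so injectivity on tangent spaces is equivalent to the dual map of cotangent spaces $\mathfrak{m}_x/\mathfrak{m}_x^2 \to (t)/(t^2)$ being nonzero, hence surjective. Consequently there exists $a\in \mathfrak{m}_x$ with $\phi(a)\equiv c\,t\pmod{t^2}$ for some $c\in k^{\times}$, and after rescaling we may assume $\phi(a)=t+b_2 t^2+\cdots+b_m t^m$. Writing this as $\phi(a)=t\cdot u$ with $u\in R^{\times}$, the ideal generated by $\phi(a)$ in $R$ is $(t)$, and more generally $\phi(a)^i\cdot R=(t^i)$ for $0\le i\le m$.

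Then I would conclude by a direct filtration argument. The image of $\phi$ contains the constants $k$ (via the inclusion $k\hookrightarrow\mathcal{O}_{X,x}$ of the residue field) and the elements $\phi(a),\phi(a)^2,\ldots,\phi(a)^m$. Because $\phi(a)^i$ lies in $(t^i)\setminus(t^{i+1})$, the $m+1$ elements $1,\phi(a),\phi(a)^2,\ldots,\phi(a)^m$ are $k$-linearly independent in $R$, which has $\dim_k R=m+1$. Hence they form a $k$-basis of $R$, so $\phi$ is surjective. Equivalently, one can phrase the same step via Nakayama applied to the filtration by powers of $(t)$: surjectivity modulo $(t^2)$ plus the fact that $(t)$ is nilpotent forces surjectivity.

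There is no substantive obstacle here; the only thing to watch is bookkeeping, namely that the hypothesis really does produce an element of $\mathfrak{m}_x$ (not merely of $\mathcal{O}_{X,x}$) whose image generates the maximal ideal of $R$, so that its powers exhaust the $(t)$-adic filtration. Once that is in place the dimension count closes the argument immediately.
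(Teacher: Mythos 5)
Your proposal is correct and follows essentially the same route as the paper: use the tangent space hypothesis to produce an element mapping to $t$ plus higher-order terms, then take powers to exhaust $k[t]/(t^{m+1})$ and conclude surjectivity. The only difference is that you spell out the final step (linear independence of $1,\phi(a),\ldots,\phi(a)^m$ and the dimension count) where the paper simply says ``taking powers.''
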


\begin{proof}
Taking an affine chart around the image of ${\rm Spec}(k[t]/(t^{m+1}))\to X$ reduces us gives us a map of rings $A\to k[t]/(t^{m+1})$. By assumption, $A\to k[t]/(t^2)$ is surjective, so there is an element of the form $t+a_2t^2+\cdots+a_mt^m$ in the image of $A\to k[t]/(t^{m+1})$. Taking powers, we see that $A\to k[t]/(t^{m+1})$ is surjective. 
\end{proof}

\begin{Prop}
\label{fm}
There exist maps $f_m: \ms{C}\to \Curv^m(X)$ lifting $f$ that makes the diagram below compatible:
\begin{center}
\begin{tikzcd}
& \vdots \arrow[d]\\
& \Curv^m(X)\arrow[d]\\
& \Curv^{m-1}(X)\arrow[d]\\
 & \vdots \arrow[d]\\
\ms{C} \arrow[ruuu,"f_m"] \arrow[ruu,swap,"f_{m-1}"] \arrow[r,"f_1=f"]  & \Curv^1(X)=X
\end{tikzcd}
\end{center}
\end{Prop}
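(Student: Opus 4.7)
The plan is to build $f_m$ via the universal property of the Hilbert scheme: it suffices to exhibit a closed subscheme $Z_m\subset \ms{C}\times X$, flat of length $m$ over $\ms{C}$, whose fibers are curvilinear subschemes of $X$. The natural choice over $c\in\ms{C}$ is the image under $f$ of the $(m-1)$-st order infinitesimal neighborhood of $c$ inside its fiber $C_b=\pi^{-1}(\pi(c))$; this is a thickening of a point on a smooth curve and hence curvilinear of length $m$.

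To carry this out in families, let $I\subset \OO_{\ms{C}\times_B\ms{C}}$ denote the ideal sheaf of the relative diagonal and set $\Delta_m := V(I^m)$. Because $\pi$ is smooth of relative dimension $1$, the first projection $p_1:\Delta_m\to \ms{C}$ is finite and flat of constant degree $m$: \'etale-locally on $\ms{C}$ we can trivialize $\pi$ so that $\ms{C}\times_B\ms{C}$ becomes $\ms{C}\times_B\mb{A}^1_B$, and then $\OO_{\Delta_m}\cong \OO_{\ms{C}}[u]/(u^m)$ where $u$ is a coordinate transverse to the diagonal. Composing with $f$ on the second factor yields the candidate family
\begin{align*}
\phi \;=\; (p_1,\,f\circ p_2):\Delta_m \longrightarrow \ms{C}\times X.
\end{align*}
Granted that $\phi$ is a closed immersion, $Z_m := \phi(\Delta_m)$ is flat of length $m$ over $\ms{C}$ with curvilinear fibers, so the universal property of $\Hilb_X$ produces $f_m:\ms{C}\to \Curv^m(X)$.

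The main obstacle is verifying that $\phi$ really is a closed immersion. It is automatically finite, being proper (since $p_1$ is proper and $\ms{C}\times X\to \ms{C}$ is separated) with finite fibers, so one only needs the surjection $\OO_{\ms{C}\times X}\twoheadrightarrow \phi_{*}\OO_{\Delta_m}$ of coherent sheaves. At a point $(c,f(c))$ this is a map of finitely generated modules over the local ring $\OO_{\ms{C}\times X,(c,f(c))}$; reducing modulo the maximal ideal of $\OO_{\ms{C},c}$ turns it into the structure sheaf map for $\Spec(k[t]/(t^m))\to X$ coming from the $(m-1)$-st order neighborhood of $c$ in $C_b$, which is surjective by Lemma \ref{immersion} together with the hypothesis that $f$ is immersed on fibers. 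Since the image of the maximal ideal of $\OO_{\ms{C},c}$ lies in the maximal ideal of $\OO_{\ms{C}\times X,(c,f(c))}$, Nakayama's lemma applied to the cokernel of $\OO_{\ms{C}\times X}\to \phi_{*}\OO_{\Delta_m}$ forces that cokernel to vanish.

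Finally, compatibility of the $f_m$ with the truncations $\Curv^m(X)\to \Curv^{m-1}(X)$ is built into the construction, since the inclusions $\Delta_{m-1}\subset \Delta_m$ induce nested subschemes $Z_{m-1}\subset Z_m\subset \ms{C}\times X$, and fiberwise $Z_{m-1}|_c$ is the unique length-$(m-1)$ subscheme of the curvilinear scheme $Z_m|_c$.
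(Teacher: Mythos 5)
Your construction is correct and is essentially the paper's own argument: both realize the universal family of $m$-th order fiberwise neighborhoods as a subscheme of $\ms{C}\times_B\ms{C}$ (you write it explicitly as $V(I^m)$ for $I$ the ideal of the relative diagonal, the paper identifies it as the universal curvilinear subscheme of the relative Hilbert scheme), map it to $\ms{C}\times X$ via $(p_1, f\circ p_2)$, and prove this is a closed immersion by checking surjectivity on structure sheaves fiberwise using Lemma \ref{immersion} plus a Nakayama-type argument. Your version is if anything slightly more explicit about the flatness of $\Delta_m\to\ms{C}$, which is needed to invoke the universal property of the Hilbert scheme and to justify that reduction modulo $\mf{m}_c$ recovers the fiberwise map; no gaps.
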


\begin{proof}
The compatibility will follow from the construction. Since $\pi: \ms{C}\to B$ is a family of smooth curves, the curvilinear locus in the relative Hilbert scheme parameterizing length $m$ subschemes in the fibers of $\pi$ is isomorphic to $\ms{C}$. Therefore, the universal subscheme $\mc{Z}_m$ of curvilinear schemes can be regarded as a family over $\ms{C}$ and a subscheme of $\ms{C}\times_B\ms{C}$. 
\begin{center}
\begin{tikzcd}
\mc{Z}_m\arrow[d] \arrow[r] & \ms{C} \arrow[r,"f"] \arrow[d,"\pi"]& X\\
\ms{C}\arrow[r,"\pi"] & B &
\end{tikzcd}
\end{center}

Taking the product of the projection map $\mc{Z}_m\to \ms{C}$ from the universal family and the composite of $\mc{Z}_m\to \ms{C}\xrightarrow{f} X$, we get a map $\phi: \mc{Z}_m\to X\times \ms{C}$ as schemes over $\ms{C}$. Let $b\in B$ and $p\in C_b:=\pi^{-1}(b)$. The map $\phi$ restricted to a fiber over $p\in \ms{C}$ is the map from the $m^{th}$ order neighborhood of $p$ in $C_b$ to $X$ under $f$, which is a closed immersion by Lemma \ref{immersion}. We claim that $\phi$ is a closed immersion, so $\mc{Z}_m\subset X\times \ms{C}$ induces a map $f_m: \ms{C}\to \Curv^m(X)$. 

To see $\phi$ is a closed immersion, we first note that topologically it is a homeomorphism onto a graph in $X\times \ms{C}$. Now, $\phi_{*}\ms{O}_{\mc{Z}_m}$ is coherent by properness, so the cokernel $\mc{K}$ of $\ms{O}_{X\times \ms{C}}\to \phi_{*}\ms{O}_{\mc{Z}_m}$ is coherent. We want to show the cokernel is zero. 

The support of $\mc{K}$ is on the graph of $\phi$, which is homeomorphic to $\ms{C}$ under the projection ${\rm pr}: X\times \ms{C}\to \ms{C}$. Therefore, it suffices to show ${\rm pr}_{*}\mc{K}=0$. To show this, we use the fact that $\phi$ is a closed immersion when restricted to each closed point of $\ms{C}$. This means the coherent sheaf ${\rm pr}_{*}\mc{K}$ is zero when restricted to each closed point of $\ms{C}$, so it is zero. 
\end{proof}

\begin{Prop}
\label{HilbIm}
There exists a dense open set $U\subset B$ for which the map $B\dashrightarrow \Hilb_X$ is defined on $U$ and given set-theoretically by sending $b\in U$ to the reduced image $f(\pi^{-1}(b))\subset X$. 
\end{Prop}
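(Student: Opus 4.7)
The plan is to realize the desired set-theoretic assignment as a morphism by taking the scheme-theoretic image of the whole family inside $B \times X$ and then applying generic flatness together with a generic reducedness argument for the fibers.

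First I would form the morphism $(\pi,f) : \ms{C} \to B \times X$. Factoring it as the graph $\ms{C} \hookrightarrow \ms{C} \times X$ (a closed immersion, since $X$ is separated) followed by $\pi \times \mathrm{id}_X : \ms{C} \times X \to B \times X$ (proper, since $\pi$ is), shows that $(\pi,f)$ is proper. Let $Z \subset B \times X$ be its scheme-theoretic image. Since $\ms{C}$ is integral (smooth with connected fibers over the integral base $B$), $Z$ is integral as well. Note also that the set-theoretic image of $(\pi,f)$ meets $\{b\} \times X$ in exactly $f(\pi^{-1}(b))$, so the underlying space of each fiber $Z_b$ of the projection $\rho : Z \to B$ is $f(C_b)$.

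Next I would appeal to generic flatness for the proper morphism $\rho : Z \to B$, producing a dense open $U_1 \subset B$ over which $\rho$ is flat. Flatness and properness of $Z|_{U_1} \to U_1$ then package the family as a morphism $U_1 \to \Hilb_X$ in the usual way.

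The main step, and the main obstacle, is to shrink $U_1$ to a dense open $U \subset U_1$ on which the fibers $Z_b$ are reduced, so that $Z_b = f(C_b)_{\mathrm{red}}$ as required. For this I would first show the generic fiber $Z_\eta$ is geometrically reduced over $k(B)$: the generic fiber $\ms{C}_\eta$ is smooth over $k(B)$, hence geometrically reduced; scheme-theoretic image commutes with the flat base change $\mathrm{Spec}(k(B)) \to B$ and further with $\overline{k(B)} \to k(B)$, so $Z_\eta \times_{k(B)} \overline{k(B)}$ is the scheme-theoretic image of the reduced scheme $\ms{C}_\eta \times_{k(B)} \overline{k(B)}$, and is therefore reduced. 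Then by the openness of the locus of geometrically reduced fibers in a flat proper family (EGA IV, 12.2.1 and 12.2.4), a dense open $U \subset U_1$ has reduced fibers, giving the conclusion. The subtle point is that in positive characteristic one genuinely needs to check \emph{geometric} reducedness of the generic fiber rather than mere reducedness, which is why the argument is routed through the smoothness of $\pi$ and the behavior of scheme-theoretic images under base change.
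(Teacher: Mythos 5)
Your proposal is correct and follows essentially the same route as the paper: take the scheme-theoretic image of $(\pi,f)$ in $B\times X$, apply generic flatness to get the map to $\Hilb_X$, and establish geometric reducedness of the generic fiber by using that scheme-theoretic image commutes with flat base change to (the algebraic closure of) $k(B)$, then spread out via EGA IV 12. The only cosmetic difference is that the paper proves the generic fiber is geometrically \emph{integral} rather than just geometrically reduced, but since the fibers $f(C_b)$ are irreducible anyway, your argument yields the same conclusion.
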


\begin{proof}
The map $B\dashrightarrow \Hilb_X$ is defined as follows. The product of the map $f:\ms{C}\to X$ and $\ms{C}\to B$ gives a map $g: \ms{C}\to X\times B$ and a closed subscheme $g(\ms{C})\subset X\times B$. By generic flatness \cite[Tag 0529]{stacks-project}, $g(\ms{C})\to B$ is flat over a dense open $U\subset B$, which defines a map $U\to \Hilb_X$. 

Also, the generic fiber of $g(\ms{C})\to B$ is integral as $g(\ms{C})$ is the closure of the image of the generic point of $\ms{C}$ in $X\times B$. If the characteristic of the base field is zero, then the generic fiber is geometrically integral, so the general fiber of $g(\ms{C})\to B$ is integral \cite[IV 12.1.1 (x)]{EGA} and we can restrict $U$ so that every fiber of $g(\ms{C})|_{U}\to U$ is integral. 

In arbitrary characteristic, $g(\ms{C})$ agrees with the scheme-theoretic image as $\ms{C}$ is reduced \cite[Tag 056B]{stacks-project}. Scheme-theoretic image of a quasicompact morphism commutes with flat base change \cite[Tag 081I]{stacks-project}. We can base change by the map $X_{K}\to X\times B$, where $\Spec(K)\to B$ is a geometric point with image the generic point of $B$. Then, we find the scheme theoretic image $g(\ms{C})$ pulled back to $X_K$ is the scheme theoretic image of an integral scheme over $K$, namely $\ms{C}$ restricted to $\Spec(K)\to B$, so the generic fiber of $g(\ms{C})$ is geometrically integral. Then, we can restrict $U$ so that every fiber of $g(\ms{C})|_{U}\to U$ is integral. 

Finally, over each $b\in U$, $g(\ms{C})$ restricted to $b$ is the reduced image of $\pi^{-1}(b)$ under $f$. 
\end{proof}


The statement of Proposition \ref{TTQQ} is a bit long, but the intent is to say that, to specify an integral curve, it suffices to specify a large curvilinear scheme contained in the integral curve. 
\begin{Prop}
\label{TTQQ}
Suppose we have an open $U\subset B$ as in Proposition \ref{HilbIm}, giving a map $\phi: U\to \Hilb_X$ where $b\in U$ is sent to the reduced image $f(\pi^{-1}(b))\subset X$ and suppose $p\in X$. We can choose $m$ large enough so that the composite $f^{-1}(p)\cap \ms{C}|_{U}\to U\xrightarrow{\phi} \Hilb_X$ factors through $f^{-1}(p)\cap \ms{C}|_{U}\to \ms{C}|_{U}\xrightarrow{f_m} \Curv^m(X)$ set-theoretically.
\begin{center}
\begin{tikzcd}
\ms{C}|_{U}\cap f^{-1}(p) \arrow[r, hook] & \ms{C}|_{U} \arrow[d, "\pi"] \arrow[r,"f_m"]& \Curv^m(X) \\
& U \arrow[r,"\phi"] & \Hilb_X
\end{tikzcd}
\end{center}
Written out, this is the statement that if $f_m(q)=f_m(q')$ for all $q$ and $q'$ in $\ms{C}|_{U}\cap f^{-1}(p)$, then $\phi(\pi(q))=\phi(\pi(q'))$. Here the maps $f_m: \ms{C}\to \Curv^{m}(X)$ are from Proposition \ref{fm}.
\end{Prop}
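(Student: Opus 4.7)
My plan is a Noetherian stabilization argument applied to the fiber $W := f^{-1}(p) \cap \ms{C}|_U$, a finite-type scheme over $k$. For each $m \geq 1$ introduce the closed subscheme
\[
R_m \;=\; (f_m \times f_m)^{-1}(\Delta_{\Curv^m(X)}) \;\subseteq\; W \times W
\]
cut out by $f_m(q) = f_m(q')$, and analogously the closed subscheme $R_\phi \subseteq W \times W$ cut out by $\phi(\pi(q)) = \phi(\pi(q'))$. The compatibility in Proposition \ref{fm} of the tower $\cdots \to \Curv^{m+1}(X) \to \Curv^m(X)$ with the lifts $f_m$ gives a descending chain $R_1 \supseteq R_2 \supseteq \cdots$; since $W \times W$ is Noetherian, this chain stabilizes to some $R_\infty = R_{m_0}$, and the proposition reduces to showing the inclusion $R_\infty \subseteq R_\phi$.

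For this, take $(q,q') \in R_\infty$ and set $Y = \phi(\pi(q))$, $Y' = \phi(\pi(q'))$, which by Proposition \ref{HilbIm} are reduced integral curves in $X$ containing $p$. The key geometric observation is that $f_m(q) \subseteq Y$ as a closed subscheme of $X$. Indeed, because $C_b$ is smooth and hence reduced, the scheme-theoretic image of $f|_{C_b}$ is itself reduced, so it agrees with $\phi(b) = f(C_b)_{\mathrm{red}} = Y$, forcing $f|_{C_b}$ to factor scheme-theoretically through $Y$. Unwinding the construction in Proposition \ref{fm}, the closed subscheme $f_m(q) \subseteq X$ is the image of the $m$-th infinitesimal neighborhood of $q$ in $C_{\pi(q)}$ under $f$, hence lies in $Y$. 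By hypothesis $f_m(q) = f_m(q') \subseteq Y'$ as well, so $f_m(q) \subseteq Y \cap Y'$ for every $m \geq 1$.

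Now $f_m(q)$ is a curvilinear scheme of length $m$ supported at $p$, so the local ring $\ms{O}_{Y \cap Y',p}$ admits quotients of arbitrarily large finite length and therefore cannot be Artinian. Equivalently, $\dim_p(Y \cap Y') \geq 1$, and some one-dimensional irreducible component of $Y \cap Y'$ passes through $p$. Being a one-dimensional closed subvariety of the integral one-dimensional curve $Y$, this component must coincide with $Y$, and symmetrically with $Y'$; hence $Y = Y'$ and $(q,q') \in R_\phi$, completing the argument.

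The one step I expect to require the most care is the set-theoretic containment $f_m(q) \subseteq \phi(\pi(q))$, which is the bridge between the infinitesimal data recorded by $f_m$ and the global Hilbert point recorded by $\phi$. Once this is verified, essentially as a consequence of the construction in Proposition \ref{fm} combined with the reducedness of $C_b$, the Noetherian stabilization and intersection-dimension steps are short and formal.
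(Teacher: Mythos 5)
Your proof is correct, but it takes a genuinely different route from the paper's. The paper works on the Hilbert scheme side: it considers the locus $\RHilb_X$ of integral curves containing the image of $\phi$, forms the intersection of the two pulled-back universal curves over $(\RHilb_X\times\RHilb_X)\setminus\Delta$, and uses coherence of the pushforward, upper semicontinuity of fiber rank, and Noetherian induction to extract a \emph{uniform} integer $m$ bounding the length of the scheme-theoretic intersection of any two distinct integral curves in the family; the contrapositive of the proposition is then immediate, since $f_m(q)\subseteq\phi(\pi(q))$ and $f_m(q')\subseteq\phi(\pi(q'))$ cannot both hold for a length-$m$ scheme once the two curves differ. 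You instead run the Noetherian argument on the source: the descending chain of agreement loci $R_m\subseteq W\times W$ stabilizes, and on the stable locus all-order agreement of the curvilinear lifts forces $\mathscr{O}_{Y\cap Y',p}$ to be non-Artinian, hence $Y=Y'$ because distinct integral curves meet in dimension zero. Both arguments are sound; the bridge step you flag ($f_m(q)\subseteq\phi(\pi(q))$, via reducedness of $C_b$ and factorization through the reduced image) is exactly right and is also implicit in the paper's contrapositive. The trade-off is that your $m$ depends on the point $p$ (through $W=f^{-1}(p)\cap\ms{C}|_U$), whereas the paper's $m$ depends only on the family of image curves and is independent of $p$; since Lemma \ref{curvilinear} applies the proposition at a single general point $p$, this makes no difference to the application. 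Your version is somewhat more self-contained, avoiding the universal curve over $\RHilb_X$ and the semicontinuity argument entirely.
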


\begin{proof}
Let $\RHilb_X\subset \Hilb_X$ denote an open locus parameterizing integral curves that are subschemes of $X$, such that $\RHilb_X$ is finite type and contains the image of $\phi$. Let $\Delta_{\RHilb_X}\subset \RHilb_X\times \RHilb_X$ denote the diagonal, ${\rm pr}_1, {\rm pr}_2$ denote the two projections $(\RHilb_X\times\RHilb_X)\backslash \Delta_{\RHilb_X}\to \RHilb_X$ and $\ms{C}_{{\rm univ}}\to \RHilb_X$ denote the universal curve restricted to $\RHilb_X$. 

Since two different integral curves can have intersection at most zero-dimensional, the fibers of ${\rm pr}_1^{*}\ms{C}_{{\rm univ}}\cap {\rm pr}_2^{*}\ms{C}_{{\rm univ}}\to (\RHilb_X\times\RHilb_X)\backslash \Delta_{\RHilb_X}$ has finite fibers. Pushing forward the structure sheaf of ${\rm pr}_1^{*}\ms{C}_{{\rm univ}}\cap {\rm pr}_2^{*}\ms{C}_{{\rm univ}}$ to $(\RHilb_X\times\RHilb_X)\backslash \Delta_{\RHilb_X}$ gives a coherent sheaf $\ms{F}$ over $(\RHilb_X\times\RHilb_X)\backslash \Delta_{\RHilb_X}$. Applying upper semicontinuity of the ranks of the fibers of $\ms{F}$ over the points of $(\RHilb_X\times\RHilb_X)\backslash \Delta_{\RHilb_X}$ together with Noetherian induction shows that the rank of $\ms{F}$ is strictly bounded above by some integer $m$. We claim that this $m$ suffices.

We will show the contrapositive. Suppose we have $q,q'\in \ms{C}|_{U}\cap f^{-1}(p)$ and $\phi(\pi(q))\neq \phi(\pi(q'))$. This means $\phi(\pi(q))$ and $\phi(\pi(q'))$ are integral curves that are subschemes of $X$ that intersect in a finite scheme of length strictly less than $m$. Then, $f_m(q)\neq f_m(q')$. 
\end{proof}


Finally, we prove Lemma \ref{curvilinear}.
\begin{proof}[Proof of Lemma \ref{curvilinear}]
Pick an integer $m$ large enough satisfying the conditions of Proposition \ref{TTQQ}. Consider the maps $f_m$ given in Proposition \ref{fm}. 
\begin{center}
\begin{tikzcd}
& \vdots \arrow[d]\\
& \Curv^m(X)\arrow[d]\\
& \Curv^{m-1}(X)\arrow[d]\\
 & \vdots \arrow[d]\\
\ms{C} \arrow[ruuu,"f_m"] \arrow[ruu,swap,"f_{m-1}"] \arrow[r,"f_2"]  & \Curv^2(X)=\mb{P}(TX)
\end{tikzcd}
\end{center}
Let $\Curv^m_p(X)$ denote the fiber of $\Curv^m_p(X)\to X$ over $p$. Restricting to $f^{-1}(p)$ yields
\begin{center}
\begin{tikzcd}
& \vdots \arrow[d]\\
& \Curv^m_p(X)\arrow[d]\\
& \Curv^{m-1}_p(X)\arrow[d]\\
 & \vdots \arrow[d]\\
f^{-1}(p) \arrow[ruuu,"f_m"] \arrow[ruu,swap,"f_{m-1}"] \arrow[r,"f_2=\gamma"]  & \Curv_p^2(X)=\mb{P}(T_pX)
\end{tikzcd}
\end{center}
Let $U\subset B$ be as in Proposition \ref{HilbIm}. Since $p\in f(\ms{C})$ is general, $U\cap f^{-1}(p)\subset f^{-1}(p)$ is dense. Also the fibers of the map $U\cap f^{-1}(p)\to \Hilb_X$ is generically finite by our assumption in Definition \ref{familydef}. Then, Proposition \ref{TTQQ} implies the fibers of $f_m$ restricted to $U\cap f^{-1}(p)$ are also generically finite. Applying Lemma \ref{ab} shows the map $f_2$ restricted to $U\cap f^{-1}(p)$ is also generically finite, so $\gamma$ is generically finite. 
\end{proof}

\section{Positivity of the obstruction class}
\label{appendix2}
\begin{proof}[Proof of Theorem \ref{posTX}]
Let $\pi: \ms{C}\to B$ and $f: \ms{C}\to G/P$ be our family in terms of Definition \ref{familydef}. Suppose for the sake of contradiction that $\dim(B)\geq n-1$. If necessary, we can slice $B$ by a general hyperplane class until $\dim(B)=n-1$ and $\dim(\ms{C})=n$. We apply Lemma \ref{obstruction} to get
\begin{align*}
(K^n + f^{*}c_1(TX)K^{n-1}+\cdots+f^{*}c_n(TX))\cap [\ms{C}]&=0
\end{align*}
on $\ms{C}$. We will show that in fact 
\begin{align}
\int_{\ms{C}}{(K^n + f^{*}c_1(TX)K^{n-1}+\cdots+f^{*}c_n(TX))\cap [\ms{C}]}&>0\label{positive}
\end{align}

First, we claim 
\begin{align}
\int_{\ms{C}}{f^{*}c_i(TX)\cap K^{n-i}\cap [\ms{C}]}\geq 0 \label{nonnegative}
\end{align}
for each $i$. 

To see this, it suffices to note that $c_i(f^{*}TX)\cap [\ms{C}]$ is effective since $f^{*}TX$ is globally generated \cite[Example 14.4.3 (i)]{Fulton} and then use the fact that $K$ is nef to see that a power of $K$ intersected with an effective cycle of the appropriate dimension is nonnegative \cite[Example 1.4.16]{Pos1}. 

Now, let $b=\dim(f(\ms{C}))$. We claim
\begin{align*}
\int_{\ms{C}}{f^{*}c_b(TX)\cap K^{n-b}\cap [\ms{C}]}> 0.
\end{align*}
Since pushing forward from $\ms{C}$ to a point can be factored as the composite of pushing forward via $f$ and then pushing forward from $X$ to a point, we can apply the projection formula \cite[Example 8.1.7]{Fulton}, to obtain 
\begin{align*}
\int_{\ms{C}}{f^{*}c_b(TX)\cap K^{n-b}\cap [\ms{C}]}=\int_{X}{f_{*}([\ms{C}]\cap K^{n-b})\cap c_b(TX)}. 
\end{align*}
Now, we claim $f_{*}([\ms{C}]\cap K^{n-b})=a[f(\ms{C})]$ for $a>0$ as cycles on $X$. It's clear that $f_{*}([\ms{C}]\cap K^{n-b})=a[f(\ms{C})]$ for some integer $a$ because $f_{*}([\ms{C}]\cap K^{n-b})$ must be supported on $f(\ms{C})$ and has the same dimension, so it remains to show $a$ is positive. To see this, let $H$ be a very ample divisor class on $X$. Then,
\begin{align*}
a &= \frac{\int_{X}{f_{*}([\ms{C}]\cap K^{n-b})\cap H^b}}{\int_{X}{[f(\ms{C})]\cap H^b}}.
\end{align*}
If we choose $b$ general sections of $\ms{O}_X(H)$, $s_1,\ldots,s_b$, 
\begin{align*}
\int_{X}{[f(\ms{C})]\cap H^b}=\#S,
\end{align*}
where $S=f(\ms{C})\cap \{s_1=\cdots=s_b=0\}$. Also, applying the projection formula again yields
\begin{align*}
\int_{X}{f_{*}([\ms{C}]\cap K^{n-b})\cap H^b}&=\int_{\ms{C}}{[\ms{C}]\cap K^{n-b}\cap f^{*}H^b}\\
&=\sum_{p\in S}{\int_{f^{-1}(p)}{K^{n-b}}},
\end{align*}
and each integral $\int_{f^{-1}(p)}{K^{n-b}}$ is positive by Corollary \ref{gauss}. Finally, applying our assumption on $c_b(TX)$, we find 
\begin{align*}
\int_{X}{f_{*}([\ms{C}]\cap K^{n-b})\cap c_b(TX)}=a\int_{X}{[f(\ms{C})]\cap c_b(TX)}>0.
\end{align*}
Finally, adding up the integral \eqref{nonnegative} for each $i$ and noting that the contribution in the case $i=b$ is positive yields \eqref{positive}. 
\end{proof}

\bibliographystyle{plain}
\bibliography{references.bib}
\end{document}